%
%
%
%
%
%
%
\documentclass[12pt]{amsart}
\usepackage{amssymb,amsmath,amscd,graphicx,
latexsym,amsthm, amscd}
\usepackage{amssymb,latexsym,eufrak,amsmath,amscd,graphicx}
  \usepackage[all]{xy}
  \usepackage{pdfsync}
\usepackage{graphicx,mathrsfs,setspace,fancyhdr}
\setlength{\textwidth}{6.25 in}
\setlength{\topmargin} {-.3 in}
 \setlength{\evensidemargin}{0 in}
\setlength{\oddsidemargin}{0 in}
\setlength{\footskip}{.3 in}
\setlength{\headheight}{.5 in}
\setlength{\textheight}{8.6  in}
\setlength{\parskip}{.10 in}
\setlength{\parindent}{.3 in}
 
\theoremstyle{plain}
\newtheorem{theorem}{Theorem}

\newtheorem{lemma}[theorem]{Lemma}

\newtheorem{proposition.definition}[theorem]{Proposition/Definition}

\newtheorem{theoremalpha}{Theorem}
\newtheorem{corollaryalpha}[theoremalpha]{Corollary}
\newtheorem{propositionalpha}[theoremalpha]{Proposition}

\theoremstyle{definition}

\newtheorem{remark}[theorem]{Remark}

\newtheorem{problem}[theorem]{Problem}
\newtheorem{conjecture}[theorem]{Conjecture}

\newcommand{\lra}{\longrightarrow}

\newcommand{\noi}{\noindent}
\newcommand{\PP}{\mathbf{P}}
\newcommand{\PPP}{\PP_{\text{sub}}}

\newcommand{\ZZ}{\mathbf{Z}}
\newcommand{\CC}{\mathbf{C}}

\newcommand{\OO}{\mathcal{O}}

\newcommand{\frakm}{\mathfrak{m}}

\newcommand{\rk} {\text{rank }}

\newcommand{\HH}[3]{H^{{#1}} \big( {#2} , {#3}
\big) }

\newcommand{\lin}{\equiv_{\text{lin}}}

\newcommand{\Linser}[1]{| \mspace{1.5mu} {#1}
\mspace{1.5mu} |}
\newcommand{\linser}[1]{\Linser{  {#1}  }}

\newcommand{\ol}[1]{\overline{#1}}

\newcommand{\Pic}{\textnormal{Pic}}

\numberwithin{theorem}{section}
\numberwithin{equation}{section}

\begin{document}

\title{Stability of syzygy bundles on an algebraic surface}

 \author{Lawrence Ein}
  \address{Department of Mathematics, University Illinois at Chicago, 851 South Morgan St., Chicago, IL  60607}
 \email{{\tt ein@uic.edu}}
 \thanks{Research of the first author partially supported by NSF grant DMS-1001336.}

 \author{Robert Lazarsfeld}
  \address{Department of Mathematics, University of Michigan, Ann Arbor, MI
   48109}
 \email{{\tt rlaz@umich.edu}}
 \thanks{Research of the second author partially supported by NSF grant DMS-1159553.}
 
 \author{Yusuf Mustopa}
 \address{Department of Mathematics, Boston College, Chestnut Hill, MA 02467}
 \email{{\tt mustopa@bc.edu}}
 \thanks{Research of the third author partially supported by NSF grant RTG DMS-0502170.}
 
\maketitle

\section*{Introduction}
 
The purpose of this note is to prove the stability of the syzygy bundle associated to any sufficiently positive embedding of an algebraic surface.

 Let $X$ be a smooth projective algebraic variety over an algebraically closed field $k$, and let $L$ be a very ample line bundle on $X$. The \textit{syzygy} (or \textit{kernel}) \textit{bundle} $M_L$ associated to $L$ is by definition the kernel of the evaluation map 
 \[  \text{eval}_L : H^0(L) \otimes_k\OO_X \lra L. \]
 Thus $M_L$ sits in an exact sequence
 \[   0 \lra M_L \lra H^0(L) \otimes_k\OO_X \lra L \lra 0. \]
 These vector bundles (and some analogues) arise in a variety of geometric and algebraic problems, ranging from  the syzygies of $X$ to questions of tight closure.  Consequently  there has been considerable interest in trying to establish the stability of $M_L$ in various settings. When $X$ is a smooth curve of genus $g\ge 1$, the situation is well-understood thanks to the work of several authors (\cite{PR}, \cite{Bu}, \cite{EL1}, \cite{Bea}, \cite{Cam1}, \cite{Mis}):  in particular $M_L$ is stable as soon as $\deg L \ge 2g +1$. When $X = \PP^n$ and $L = \OO_{\PP^n}(d),$ the stability of $M_L$ was established by Flenner \cite[Cor. 2.2]{Fl} in characteristic $0$ and by Trivedi \cite{Tri} in characteristic $> 0$ for many $d$. A more general statement, due to Coand\v{a} \cite{Coa}, treats the bundles associated to possibly incomplete linear subseries of $\HH{0}{\PP^n}{\OO_{\PP^n}(d)}$.  Motivated by questions of tight closure, the  stability of syzygy bundles on $\mathbf{P}^{n}$ arising from a somewhat more general construction has been studied by Brenner \cite{Bre} and by Costa, Marques and Mir\'{o}-Roig \cite{CMMR}, \cite{MMR}.  In dimension $2$, Camere \cite{Cam2} recently proved that  kernel bundles on $K3$ and abelian surfaces are stable. 
 
We show here that if $L$ is a sufficiently positive divisor on any smooth projective surface $X$, then $M_L$ is stable with respect to a suitable hyperplane section of $X$.  Specifically, fix an ample divisor $A$ and an arbitrary divisor $P$ on $X$. Given a large integer $d$, set
 \[   L_d \ = \ dA + P, \]
 and write $M_d = M_{L_d}$.
Our main result is
\begin{theoremalpha} \label{Theorem.A.Intro}
If $d$ is sufficiently large, then $M_d$ is slope stable with respect to $A$. 
\end{theoremalpha}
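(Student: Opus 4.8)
\emph{Proof strategy.}

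It suffices to show that every saturated subsheaf $0\neq\mathcal{F}\subsetneq M_d$, say of rank $r$, satisfies $\mu_A(\mathcal{F})<\mu_A(M_d)$, where $\mu_A(\mathcal{F})=(c_1(\mathcal{F})\cdot A)/r$ and $\mu_A(M_d)=-(A\cdot L_d)/(h^0(L_d)-1)$, a negative quantity tending to $0$ as $d\to\infty$. Since $M_d$ is saturated in $H^0(L_d)\otimes\OO_X$, we may enlarge $\mathcal{F}$ to its saturation there (this only raises $\mu_A(\mathcal{F})$ and keeps $\mathcal{F}\subseteq M_d$); then $H^0(L_d)\otimes\OO_X/\mathcal{F}$ is torsion-free, $\mathcal{F}$ is a subbundle off a finite set, and the Plücker embedding $\det\mathcal{F}\hookrightarrow\wedge^{r}\bigl(H^0(L_d)\otimes\OO_X\bigr)$ exhibits $(\det\mathcal{F})^{-1}$ as generated off that set, hence nef; in particular $c_1(\mathcal{F})\cdot A\le 0$. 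If equality holds, then $(\det\mathcal{F})^{-1}$ is nef, effective, and orthogonal to the ample class $A$, so by the Hodge index theorem it is numerically trivial, hence trivial; the Plücker vector of $\mathcal{F}$ is then constant, so $\mathcal{F}=V_0\otimes\OO_X$ for a subspace $V_0\subseteq H^0(L_d)$, and $\mathcal{F}\subseteq M_d$ forces $V_0=0$, a contradiction. Hence $c_1(\mathcal{F})\cdot A\le -1$, which already disposes of subsheaves of bounded rank once $d$ is large (then $\mu_A(\mathcal{F})\le-1/r$ beats $\mu_A(M_d)\to0$).

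For subsheaves of large rank I would restrict to a general smooth curve $C$ in a fixed multiple $|mA|$, chosen once and for all so that general members are smooth of positive genus $g=g(C)$, which is then independent of $d$. By Serre vanishing, for $d\gg0$ the map $H^0(X,L_d)\to H^0(C,L_d|_C)$ is surjective with kernel $H^0(X,L_d-mA)$, whence an exact sequence
\[
0\longrightarrow\OO_C^{\,k}\longrightarrow M_d|_C\longrightarrow M_{L_d|_C}\longrightarrow 0,\qquad k=h^0(X,L_d-mA).
\]
Since $\deg(L_d|_C)=m(A\cdot L_d)\to\infty$ while $g$ stays fixed, for $d\gg0$ we have $\deg(L_d|_C)\ge 2g+1$, so the curve results recalled in the introduction make $B:=M_{L_d|_C}$ \emph{stable}, of slope $\mu(B)<0$ bounded away from $0$. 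Now for a saturated $\mathcal{F}\subset M_d$ and general $C$, set $\mathcal{F}_1=\mathcal{F}|_C\cap\OO_C^{\,k}$ and let $\mathcal{F}_2$ be the image of $\mathcal{F}|_C$ in $B$. Then $\deg\mathcal{F}_1\le0$ (a subsheaf of a trivial bundle) and $\deg\mathcal{F}_2\le(\operatorname{rank}\mathcal{F}_2)\,\mu(B)$ (semistability of $B$), so
\[
c_1(\mathcal{F})\cdot A=\tfrac{1}{m}\deg(\mathcal{F}|_C)\le\tfrac{1}{m}(\operatorname{rank}\mathcal{F}_2)\,\mu(B).
\]
Comparing with the destabilizing hypothesis $\mu_A(\mathcal{F})\ge\mu_A(M_d)$ and using $\operatorname{rank}(B)/(h^0(L_d)-1)=O(1/d)$, one finds that any destabilizing $\mathcal{F}$ must have $\operatorname{rank}\mathcal{F}_2$ of order at most $r/d$; in particular $\mathcal{F}_2=0$ once $r$ lies below a threshold of order $d$.

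When $\mathcal{F}_2=0$ for general $C$, the composite $\mathcal{F}\hookrightarrow H^0(L_d)\otimes\OO_X\to\bigl(H^0(L_d)/K_C\bigr)\otimes\OO_X$, where $K_C\subseteq H^0(L_d)$ consists of the sections vanishing on $C$, vanishes along $C$; letting $C$ range over enough general members so that the corresponding subspaces $K_C$ intersect in $0$ forces $\mathcal{F}\subseteq H^0(L_d)\otimes\OO_X(-mA)$, whence nef-ness of $(\det\mathcal{F}(mA))^{-1}$ gives $c_1(\mathcal{F})\cdot A\le-mr(A^2)$ and $\mu_A(\mathcal{F})\le-mA^2<\mu_A(M_d)$ for $d\gg0$. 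The delicate case — and the main obstacle — is a subsheaf of rank comparable to $h^0(L_d)$ whose restriction to a general $C$ has a small but \emph{nonzero} image in the stable bundle $B$. To handle it I would bring in two further inputs: the torsion-freeness of $\mathcal{G}:=H^0(L_d)\otimes\OO_X/\mathcal{F}$ together with the bound $h^0(\mathcal{G}|_C)\le\operatorname{rank}(\mathcal{G}|_C)+\deg(\mathcal{G}|_C)$ for a globally generated sheaf on a curve, which, fed back through $h^0(X,\mathcal{G})\ge h^0(L_d)$, sharpens the degree estimate; and/or an induction on $d$ via the sequence $0\to M_{d-1}\to M_d\to\mathcal{E}\to0$ obtained by multiplying sections by a fixed $s\in H^0(\OO_X(mA))$, in which $M_{d-1}=M_{L_{d-1}}$ is stable by induction and $\mathcal{E}$ is a torsion-free subsheaf of $H^0(L_d|_C)\otimes\OO_X$ with $\mu_{\max}(\mathcal{E})\le0$, so that a destabilizing $\mathcal{F}$ would be forced to meet neither $M_{d-1}$ nor the positive part of $\mathcal{E}$ — impossible. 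In every case the crux is to upgrade the easy inequality $c_1(\mathcal{F})\cdot A\le 0$ to a genuinely negative bound that beats $\mu_A(M_d)=-(A\cdot L_d)/(h^0(L_d)-1)$, uniformly in the rank.
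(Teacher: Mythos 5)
Your proposal does not close the argument: you yourself identify "a subsheaf of rank comparable to $h^0(L_d)$ whose restriction to a general $C$ has a small but nonzero image in the stable bundle" as the delicate case, and for it you offer only speculative remedies (the $h^0$-count for the torsion-free quotient, or an induction on $d$), neither of which is carried out and neither of which obviously works — in particular, an induction on $d$ is structurally awkward for a statement that holds only for $d\gg 0$, and the slope bookkeeping there does not by itself produce a contradiction, since both $\mu_A(M_{d-m})$ and $\mu_A(M_d)$ tend to $0$ and a destabilizing subsheaf could perfectly well meet $M_{d-m}$ in something of very negative slope compensated by a near-zero-slope image. This unhandled case is not a corner case: the paper's Lemma 1.1 shows that any destabilizing subsheaf of $M_d$ must have rank at least $h^0(L_d-B)-1$, i.e.\ rank within $O(d)$ of the full rank $h^0(L_d)-1$, so the regime you leave open is exactly where every destabilizing subsheaf lives, and your preliminary reductions (bounded rank, or rank below a threshold of order $d$) eliminate nothing that could actually occur. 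There is also a local flaw in the case you do treat: from the vanishing of the image of $\mathcal{F}$ in $(H^0(L_d)/K_{C_i})\otimes\OO_X$ along each of finitely many curves $C_i$ you cannot conclude $\mathcal{F}\subseteq H^0(L_d)\otimes\OO_X(-mA)$, because the constraint from $C_i$ says nothing at points off $C_i$ and the relevant twists $\OO_X(-C_i)\subseteq\OO_X$ differ with $i$; that case can be rescued (if $\mathcal{F}(y)\subseteq s_C\cdot H^0(L_d-mA)$ for infinitely many distinct irreducible curves $C$ through a general $y$, then $\mathcal{F}(y)=0$), but as written the step is unjustified.

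It is worth contrasting your setup with the paper's, because the choice of restriction curve is what makes the hard case tractable there. You restrict to a fixed system $|mA|$, so the trivial subbundle $\OO_C^{\,k}$ has rank $k=h^0(L_d-mA)$ growing like $d^2$ while the curve syzygy bundle has rank only $O(d)$; semistability of that small quotient then gives essentially no leverage against subsheaves of nearly full rank. The paper does the opposite: it restricts to curves $C_d\in|(d-b)A|$ through a fixed point $x$, so the trivial part is $H^0(B)_{C_d}$ of rank independent of $d$ and the quotient $\ol{M}_d$ is stable of nearly full rank. Combining the stability of $\ol{M}_d$ with the large-rank bound of Lemma 1.1, a purely numerical computation (Lemma 1.2) forces the entire trivial summand $H^0(B)_{C_d}$ to lie inside $F_d|_{C_d}$; looking at fibres over $x$, this says the image of $H^0(B)$ under multiplication by the equation of $C_d$ lies in the fixed proper subspace $F_d(x)\subsetneqq H^0(L_d\otimes\frakm_x)$ for every general $C_d$ through $x$, contradicting the surjectivity of $H^0((d-b)A\otimes\frakm_x)\otimes H^0(B)\to H^0(L_d\otimes\frakm_x)$. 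Some mechanism of this kind — converting the large-rank destabilizing hypothesis into a condition at a fixed point that can be contradicted by varying the curve — is what your sketch is missing.
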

\noi Recall that the conclusion means that if $F \subseteq M_d$ is a subsheaf with $0 < \rk(F) < \rk(M_d)$,  then 
\[   \frac{ c_1(F) \cdot A}{\rk F} \ < \ \frac{c_1(M_d) \cdot A }{\rk {M_d}}. \] 

Since a slope-stable bundle is also Gieseker stable, it follows that $M_d$ is parametrized for $d \gg 0$ by a point on the moduli space of bundles on $X$ with suitable numerical invariants. On the other hand, 
working over $\CC$, Camere \cite[Proposition 2]{Cam2} shows that if $H^1(X, \OO_X) = 0$, and if the natural map
\[  H^0(X, K_X) \otimes H^0(X, L) \lra H^0(X, K_X + L)   \]
is surjective for some very ample line bundle $L$, then $M_L$ is rigid. But this surjectivity is automatic if  $K_X$ is globally generated and   $L$ is sufficiently positive. Hence we deduce
\begin{corollaryalpha}
Let $X$ be a complex projective surface with vanishing irregularity $q(X) = 0$, and assume that $K_X$ is globally generated. Then   $M_d$ corresponds to an isolated point of the moduli space of stable vector bundles on $X$ when $d \gg 0$. \end{corollaryalpha}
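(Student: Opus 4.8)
The plan is to deduce the corollary formally from Theorem~\ref{Theorem.A.Intro}, Camere's rigidity criterion, and Serre vanishing. Fix $d$ large. By Theorem~\ref{Theorem.A.Intro} the bundle $M_d$ is slope stable with respect to $A$; since slope stability implies Gieseker stability, $M_d$ is a locally free sheaf defining a point $[M_d]$ on the moduli space $\mathcal{M}$ of $A$-Gieseker-stable sheaves on $X$ with the same rank, $c_1$ and $c_2$ as $M_d$. Deformation theory identifies the Zariski tangent space to $\mathcal{M}$ at $[M_d]$ with $\textnormal{Ext}^1(M_d,M_d)$ (using $\textnormal{Hom}(M_d,M_d)=\CC$ by stability), so it suffices to prove that this $\textnormal{Ext}$ group vanishes: then $[M_d]$ is a reduced, hence isolated, point of $\mathcal{M}$.

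To get the vanishing I would invoke Camere's Proposition~2 in \cite{Cam2}, which says (over $\CC$) that if $H^1(X,\OO_X)=0$ and the multiplication map
\[ \mu_d\colon H^0(X,K_X)\otimes H^0(X,L_d)\lra H^0(X,K_X+L_d) \]
is surjective, then $M_{L_d}$ is rigid, i.e.\ $\textnormal{Ext}^1(M_{L_d},M_{L_d})=0$. The irregularity hypothesis $H^1(\OO_X)=q(X)=0$ is given, so everything reduces to checking surjectivity of $\mu_d$ for $d\gg 0$. Here one uses that $K_X$ is globally generated: the evaluation sequence $0\to M_{K_X}\to H^0(K_X)\otimes\OO_X\to K_X\to 0$ has locally free kernel $M_{K_X}$, and tensoring by $L_d$ and passing to cohomology exhibits the cokernel of $\mu_d$ inside $H^1(X,M_{K_X}\otimes L_d)$. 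Since $M_{K_X}$ is a fixed coherent sheaf, $A$ is ample, and $L_d=dA+P$, Serre vanishing forces $H^1(X,M_{K_X}\otimes L_d)=0$ for $d$ large, so $\mu_d$ is surjective. Combining the two paragraphs gives $\textnormal{Ext}^1(M_d,M_d)=0$ for $d\gg 0$, which is the assertion.

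The genuinely hard input is Theorem~\ref{Theorem.A.Intro} itself (and, on the rigidity side, Camere's proposition); what is left here is essentially bookkeeping. The one place to be a little careful is the surjectivity step: one wants the global generation of $K_X$ precisely so that $M_{K_X}$ is an honest vector bundle and the Serre-vanishing estimate is immediate, and one should also make sure that "slope stable with respect to $A$" is being translated into membership in the $A$-polarized Gieseker moduli space, so that the word "isolated" refers to the correct moduli functor. I do not expect any real obstacle beyond these formalities.
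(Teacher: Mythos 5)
Your argument is correct and follows the same route as the paper: Theorem A gives slope (hence Gieseker) stability, Camere's rigidity criterion applies once $q(X)=0$ and the multiplication map $H^0(K_X)\otimes H^0(L_d)\to H^0(K_X+L_d)$ is surjective, and the latter is exactly the "automatic for $K_X$ globally generated and $L_d$ sufficiently positive" step, which you justify (as one should) via the evaluation sequence for $K_X$ and Serre vanishing of $H^1(X, M_{K_X}\otimes L_d)$. No gaps; the paper simply leaves the surjectivity verification implicit.
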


It is natural to suppose that the analogue of Theorem \ref{Theorem.A.Intro} holds also for varieties of dimension $\ge 3$, but unfortunately our proof does not work in this setting. However if $\Pic(X) \cong \ZZ$, then the argument of Coand\v{a} \cite{Coa} goes through with little change to establish:
\begin{propositionalpha} \label{PropB.Intro} Assume that $\dim X \geq 2$ and that $\Pic(X) = \ZZ \cdot [A]$ for some ample divisor $A$. Write $L_d = dA$. Then $M_d =_{\textnormal{def}} M_{L_d}$ is $A$-stable for $ d \gg 0$. 
\end{propositionalpha}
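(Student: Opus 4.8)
The plan is to follow the argument of Coand\v{a} \cite{Coa}, reducing the stability of $M_d$ to a cohomological vanishing on $X$ polarized by $L_d = dA$. Write $V = H^0(X, L_d)$, let $r = \rk M_d = \dim_k V - 1$, and set $\mu(E) = (c_1(E)\cdot A^{n-1})/\rk E$ for a torsion-free sheaf $E$. Given a subsheaf $F\subseteq M_d$ with $0 < p := \rk F \le r-1$, I would first note that $\det F$, being a sub-line-sheaf of $\bigwedge^p(V\otimes_k\OO_X)$, has effective inverse; since $\Pic(X) = \ZZ\cdot[A]$ this forces $\det F = \OO_X(-tA)$ with $t\ge 0$. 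The natural map $\bigwedge^p F\to\bigwedge^p M_d$ factors (through the reflexive hull of its source) as an inclusion $\det F\hookrightarrow\bigwedge^p M_d$, because $\bigwedge^p M_d$ is locally free; equivalently, there is a nonzero section of $\bigwedge^p M_d\otimes\OO_X(tA)$. As $c_1(M_d) = -d[A]$, the condition $\mu(F) < \mu(M_d)$ is exactly $tr > pd$. Hence Proposition \ref{PropB.Intro} follows once one establishes, for $d\gg 0$,
\begin{equation}\label{PropB.vanishing}
H^0\Big(X,\ \textstyle\bigwedge^{p} M_d\otimes\OO_X(tA)\Big) = 0 \qquad\text{whenever }\ 1\le p\le r-1\ \text{ and }\ 0\le t\le \tfrac{pd}{r}.
\end{equation}

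The case $t = 0$ of \eqref{PropB.vanishing} is immediate: taking global sections in the Koszul-filtration sequence $0\to\bigwedge^p M_d\to\bigwedge^p V\otimes\OO_X\to\bigwedge^{p-1}M_d\otimes L_d\to 0$ and composing with the injection $H^0(\bigwedge^{p-1}M_d\otimes L_d)\hookrightarrow\bigwedge^{p-1}V\otimes H^0(L_d)$ identifies $H^0(\bigwedge^p M_d)$ with the kernel of the Koszul differential $\bigwedge^p V\to\bigwedge^{p-1}V\otimes V$, which is injective for $p\ge 1$ since the Koszul complex of $\Sym V$ is exact in positive internal degree. So I may assume $t\ge 1$, and then (with $tr\le pd$ and $p\le r-1$) $1\le t < d$. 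Running the same computation after twisting by $\OO_X(tA)$, and using $H^0(\OO_X((t-d)A)) = 0$, one sees that $H^0(\bigwedge^p M_d\otimes\OO_X(tA))$ is the kernel of the Koszul differential $\bigwedge^p V\otimes H^0(\OO_X(tA))\to\bigwedge^{p-1}V\otimes H^0(L_d\otimes\OO_X(tA))$, i.e. a Koszul-cohomology group $K_{p,0}(X;\OO_X(tA),L_d)$. That these vanish for $t\le pd/r$ is exactly Coand\v{a}'s estimate on $\PP^n$, and the plan is to run his cohomological bookkeeping in the present setting. The one ingredient of his proof special to projective space is the vanishing $H^i(\PP^n,\OO(j)) = 0$ for $0 < i < n$ and all $j$; on $X$ this survives only for $j$ outside a fixed finite interval, but since $\Pic(X) = \ZZ\cdot[A]$ every line bundle entering the bookkeeping is a power $\OO_X(jA)$ of $A$, and for $d\gg 0$ the relevant values of $j$ are either negative (so $H^0$ dies) or --- after pulling out a power of $L_d$ --- far above that interval (so all higher cohomology dies, uniformly, by Fujita's vanishing theorem).

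The step I expect to be the real obstacle is precisely this verification of \eqref{PropB.vanishing} for $t\ge 1$: one must pin down which twists $\OO_X(jA)$ appear in the spectral sequence computing $H^0(\bigwedge^p M_d\otimes\OO_X(tA))$ and check that, under the constraints $1\le t < d$ and $tr\le pd$ on a hypothetical destabilizing subsheaf, none of them lands in the finite window where $X$ has intermediate cohomology in the offending degree --- a difficulty absent on $\PP^n$, and the only place where ``$d$ sufficiently large'' is used rather than just ``$L_d$ very ample''. (As an independent check one can instead try to restrict to a general complete-intersection curve $C\subset X$ cut by $n-1$ general members of $|eA|$ with $e\gg 0$, where $M_d|_C$ is the kernel bundle of the base-point-free series $\Image(H^0(X,L_d)\to H^0(C,L_d|_C))$ and the curve results recalled in the introduction apply, so that stability of $M_d$ would follow by Mehta--Ramanathan; but matching the numerical hypotheses of those results with the size of $e$ forced by Mehta--Ramanathan requires care.)
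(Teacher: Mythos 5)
Your reduction is sound and is in substance the same first step the paper takes (its Lemma \ref{stabcrit}): a destabilizing subsheaf $F\subseteq M_d$ of rank $p$ with $\det F=\OO_X(-tA)$, $t\ge 0$, produces a nonzero section of $\Lambda^p M_d\otimes\OO_X(tA)$ with $tr\le pd$, so everything hinges on the vanishing of $H^0\big(\Lambda^p M_d\otimes\OO_X(tA)\big)$ in that range. But that is exactly the step you leave open (``the step I expect to be the real obstacle''), and the route you sketch for it is not the right mechanism: you assume the needed vanishing is a $\PP^n$-specific estimate of Coand\v{a} whose proof uses $H^i(\PP^n,\OO(j))=0$ for $0<i<n$, and you propose to salvage it on $X$ by chasing which twists $\OO_X(jA)$ occur and invoking Fujita vanishing for $d\gg 0$. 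Nothing of the sort is needed, and nothing of the sort is carried out in your text, so as written the proof is incomplete precisely at its core.

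The input that closes the gap is Green's vanishing theorem \cite[3.a.1]{Gr} (Lemma \ref{mlg} in the paper): for \emph{any} projective variety, any very ample $N$ and any line bundle $N'$, one has $H^0(\Lambda^{p}M_{N}\otimes N')=0$ as soon as $p\ge h^0(N')$ --- no intermediate-cohomology hypotheses, no spectral-sequence bookkeeping, no Fujita. With this, the only thing left to check is numerical: that $t\le pd/r$ (equivalently $p\ge t\,\frac{h^0(dA)-1}{d}$) forces $p\ge h^0(tA)$, i.e.\ that
\[
q(a)\ :=\ \frac{h^0(aA)-1}{a}
\]
satisfies $q(a)<q(d)$ for all $1\le a\le d-1$ once $d\gg 0$. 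This follows from asymptotic Riemann--Roch, $q(a)=\frac{A^n}{n!}a^{\,n-1}+O(a^{n-2})$, and is the one place where $d\gg 0$ and the hypothesis $\dim X\ge 2$ are actually used (for a curve $q$ tends to a constant and the strict inequality fails); your proposal never formulates this comparison. So: keep your reduction, replace the planned ``bookkeeping plus Fujita'' by the citation of Green's theorem, and add the elementary growth estimate on $q$; that is the paper's proof. (Your fallback via restriction to a complete-intersection curve and Mehta--Ramanathan is, as you note yourself, not compatible with controlling $e$ and is not pursued in the paper.)
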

As in \cite{Cam2} the strategy for Theorem \ref{Theorem.A.Intro} is to reduce the question to the stability of syzygy bundles on curves, but we avoid the detailed calculations appearing in that paper. In order to explain the idea, we sketch a quick proof of Camere's result \cite[Theorem 1]{Cam2} that if $L$ is a globally generated ample line bundle on a K3 surface $X$, then $M_L$ is $L$-stable. Supposing to the contrary, let $F \subseteq M_L$ be a saturated destabilizing 
subsheaf, and fix a general point $x \in X$. Consider now a general curve $C \in |L \otimes \frakm_{x}|$; we may suppose that $F$ sits as a sub-bundle of $M_L$ along $C$.  Restriction to  $C$ yields a diagram:
\begin{equation}\label{Intro.Diagram}
\begin{gathered}
\xymatrix{
&  & F |{C}    \ar@{^{(}->}[d] \\ 0 \ar[r] & \OO_C \ar[r]& M_L|{C} \ar[r] & \ol{M}_{\Omega_C} \ar[r]&0,
}
\end{gathered}\tag{*}
\end{equation}
where $\ol{M}_{\Omega_C}$ is the syzygy bundle on $C$ associated to $\Omega_C = L|{C}$. But $\ol{M}_{\Omega_C}$ is semi-stable by \cite{PR}, while 
\[  \mu \big( F|{C} \big) \ \ge \ \mu \big( M_L|{C}) \ > \ \mu \big (\ol{M}_{\Omega_C}\big). \]
It follows that $F|{C}$ cannot inject into $\ol{M}_{\Omega_C}$, and hence the two sub-bundles $F|_{C}$ and $\OO_C$ of $M_L|{C}$ have a non-trivial intersection, which   in turn implies that $\OO_C$ is contained in  $F|_{C}$.
 On the other hand, consider  the fibres at $x$ of the various bundles in play. The vertical map in (*) corresponds to a fixed subspace $F(x) \subsetneqq H^0(X, L \otimes \frakm_x)$. So we would be asserting that the equation defining a general curve $C \in  \linser{L \otimes \frakm_x}$ lies in this subspace, and this is certainly not the case. The proof of Theorem \ref{Theorem.A.Intro} in general proceeds in an analogous manner, the main complication being that  we have to deal with a trivial vector bundle of higher rank appearing in the bottom row of (*). 

Concerning the organization of the paper, Section 1 is devoted to the proof of  Theorem \ref{Theorem.A.Intro}. Proposition \ref{PropB.Intro} appears in Section 2, where we also propose some open problems. 

We are grateful to M. Musta{\c t}{\u a} and V. Srinivas for valuable discussions.

\section{Proof of Main Theorem}

This section is devoted to the proof of Theorem A from the Introduction. 

We start by fixing notation and set-up. As in the Introduction,  $X$ is a smooth projective surface, and $L_d = dA + P$ where $A$ is an ample divisor,  and $P$ is an arbitrary divisor on $X$. For the duration of the argument we fix an integer $b \gg 0 $ such that $(bA + P - K_X)$ is very ample, and so that $H^1(X, L_b) = 0$, and put 
\[  B \ =\ L_b \ =  \ bA +P.\] Observe that $b$ and $B$ are independent of $d$. We also assume henceforth that $d$
 is sufficiently large so that $L_d$ and $L_{d-b}$ are  very ample.

Fixing  such an  integer $d$, assume now that $ M_d = M_{L_d}$ is not $A$-stable. Recall that this means that there exists a non-trivial subsheaf 
\[     F_d \ \subseteq \ M_{d} \]
such that 
\[   \frac{ c_1(F_d) \cdot A}{\rk F_d} \ \ge \ \frac{c_1(M_d) \cdot A }{\rk {M_d}}. \] 
Without loss of generality we assume that $F_d \subseteq M_d$ is saturated, and we fix a point $x = x_d \in X$ at which $F_d$ is locally free.   Writing $\frakm_x$ for the ideal sheaf of $x$, we suppose finally that $d$ is always large enough so that the natural mapping
  \begin{equation} \label{Mult.C.Surjective}
 \HH{0}{X}{(d-b)A \otimes \frakm_x} \otimes  \HH{0}{X}{B} \lra \HH{0}{X}{L_d \otimes \frakm_x}
  \end{equation}
is surjective. 

The plan is to use the stability of syzygy bundles on curves to show that if $d \gg 0$, then no such $F_d$ can actually exist. To this end, consider a general curve  \[    C_d   \ \in \ \linser{(d-b)A} \, = \, \linser{L_d - B} \]
passing through the fixed point $x\in X$. We may assume that  $C_d$ is smooth and irreducible, and  that $M_d/F_d$ is locally free along $C_d$. Observe also that for any torsion free sheaf $\mathcal{F}$ on $X$ that is locally free along $C_d$, one has
\[   \mu_A\big(\mathcal{F}\big) \ = \ \frac{1}{(d-b)} \cdot \mu \big( \mathcal{F}|C_d \big). \]
In particular, if $\mathcal{F}$ is $A$-unstable as a sheaf on $X$, then $\mathcal{F}|C_d$ is unstable on $C_d$.
  
We now consider the restriction of $M_d$ and $F_d$ to $C_d$. Writing $\ol{M}_d = M_{{\ol{L}_d}}$ for the syzygy bundle on $C_d$ of the restriction $\ol{L}_d=L_d|C_d$, we have an exact sequence
\[  0 \lra H^0(B)_{C_d} \lra M_d|C_d \lra \ol{M}_d \lra 0, \]
where the term on the left is the trivial bundle on $C_d$ with fibre $H^0(X,B)$. We  complete this to a diagram
\begin{equation}\label{Basic.Diagram}
\begin{gathered}
\xymatrix{
0\ar[r] & \ol{K}_d \ar[r] \ar@{^{(}->}[d]& F_d\mid C_d \ar[r] \ar@{^{(}->}[d]& \ol{N}_d \ar[r] \ar@{^{(}->}[d]& 0 \\ 0 \ar[r] & H^0(B)_{C_d} \ar[r]& M_d|C_d \ar[r] & \ol{M}_d \ar[r]&0
}
\end{gathered}
\end{equation}
of vector bundles on $C_d$, where $\ol{N}_d$ denotes the image of $F_d|C_d$ in $\ol{M}_d$, and $\ol{K}_d$ is the kernel of the resulting map $ F_d | C_d \lra  \ol{N}_d$. 

Observe  now that 
\[  L_d | C_d \, \lin \, \big( C_d + B \big)|C_d \, \lin \, \big( K_X + C_d + Q)|C_d \]
for some very ample divisor $Q$ on $X$. In particular, $\deg (L_d|C_d) > 2g(C_d) + 1$, and hence $\ol{M}_d$ is stable on $C_d$ thanks to \cite{EL1}. On the other hand, it follows from  the bottom row of \eqref{Basic.Diagram} that $\mu(M_d|C_d) > \mu(\ol{M}_d)$, and hence
\begin{equation}
 \mu \big( F_d|C_d \big)  \ \ge \ \mu \big( M_d|C_d) \ > \ \mu\big(\ol{M}_d\big).
 \end{equation}
Therefore $F_d | C_d$ cannot be a subsheaf of $\ol{M}_d$, and hence $\ol{K}_d \ne 0$.

The following two lemmas constitute the heart of the proof. The first asserts that the destabilizing subsheaf $F_d$ must have large rank.
\begin{lemma} \label{F.Large.Rank.Lemma}
One has 
\[ \textnormal{rank} (F_d ) \ \ge \ h^0\big( (L_d-B) \otimes \frakm_x\big) = \ h^0\big(L_d - B \big) \, - \, 1. \]
\end{lemma}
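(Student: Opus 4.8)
The plan is to derive a lower bound on $\rk(F_d)$ from the fact, established just before the lemma, that $\ol K_d \neq 0$. Recall the key diagram \eqref{Basic.Diagram}: $\ol K_d$ is a nonzero subsheaf of the trivial bundle $H^0(B)_{C_d}$, hence is a torsion-free sheaf of positive rank on $C_d$ sitting inside a trivial bundle. I want to exploit the special structure of $\ol K_d$ coming from its being a subsheaf of $F_d|_{C_d}$, together with the choice of curve $C_d \in |L_d - B|$ through the fixed point $x$.

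The essential point is to look at the generic fibre of the vertical inclusion $F_d|_{C_d} \hookrightarrow M_d|_{C_d}$, or rather at the fibre over the special point $x \in C_d$. The inclusion $F_d \subseteq M_d$ corresponds at $x$ to a fixed subspace $F_d(x) \subseteq H^0(L_d \otimes \frakm_x) = M_d(x)$, and since $F_d$ is locally free at $x$ we have $\dim F_d(x) = \rk(F_d)$. Meanwhile the subsheaf $H^0(B)_{C_d} \subseteq M_d|_{C_d}$ sits, at the point $x$, as the subspace of $H^0(L_d\otimes\frakm_x)$ consisting of those sections that, restricted to $C_d$, lie in the image of the multiplication map $H^0((d-b)A \otimes \frakm_x)\otimes H^0(B) \to H^0(L_d\otimes\frakm_x)$ — more precisely, $\ol K_d$ is contained in the kernel bundle of the surjection $H^0(B)_{C_d}\twoheadrightarrow \ol L_d = L_d|_{C_d}$ twisted appropriately, but what matters for the fibre count is: a section of $F_d|_{C_d}$ landing in $H^0(B)_{C_d}$ forces, fibrewise at $x$, the corresponding element of $F_d(x)$ to be divisible (on $C_d$) by the equation of $C_d$. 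Since $C_d$ is a \emph{general} member of $|L_d - B| = |(d-b)A|$ through $x$, and since the multiplication map \eqref{Mult.C.Surjective} is surjective, the sections of $H^0(L_d \otimes \frakm_x)$ vanishing on $C_d$ form a subspace of dimension $h^0(B) - 1$ (the $-1$ accounting for the fact that $C_d$ passes through $x$, via $h^0((L_d - B)\otimes\frakm_x) = h^0(L_d-B)-1$, so the map $H^0((d-b)A\otimes\frakm_x)\otimes H^0(B)\to H^0(L_d\otimes\frakm_x)$ has the relevant kernel). Because $\ol K_d \neq 0$ and $C_d$ is general, $F_d(x)$ must contain at least $h^0((L_d - B)\otimes\frakm_x)$ linearly independent sections of $H^0(L_d\otimes\frakm_x)$ of the form $(\text{equation of }C_d)\cdot s$ with $s$ ranging over a basis — in other words $\rk(F_d) = \dim F_d(x)$ is at least $h^0((L_d-B)\otimes\frakm_x) = h^0(L_d - B) - 1$.

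Concretely, the steps I would carry out are: (1) From $\ol K_d\neq 0$, produce a nonzero generic section of $\ol K_d$, i.e. a section of $F_d|_{C_d}$ whose image in $\ol M_d$ vanishes; (2) translate this into the statement that the composite $F_d|_{C_d}\hookrightarrow M_d|_{C_d}\twoheadrightarrow \ol M_d$ has nonzero kernel, hence generically $\ol N_d\subsetneq \ol M_d$, so $\ol K_d$ has rank $\geq 1$; (3) identify the fibre of the trivial subbundle $H^0(B)_{C_d}$ inside $M_d(x) = H^0(L_d\otimes\frakm_x)$ with the image of $H^0((d-b)A\otimes\frakm_x)\otimes H^0(B)$ — using genericity of $C_d$ and the surjectivity \eqref{Mult.C.Surjective} to pin down its dimension and its intersection with $F_d(x)$; (4) chase the first diagram to conclude that $\ol K_d$, having positive rank, forces $F_d(x)$ to meet this subspace in dimension at least $h^0(L_d-B)-1$, whence the bound on $\rk(F_d)$.

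**The main obstacle** I anticipate is step (3)–(4): making the fibrewise identification rigorous. One has to be careful that $\ol K_d$ is not just nonzero but that its fibre at $x$ genuinely injects into the appropriate subspace of $F_d(x)$ — this requires knowing $F_d$, $M_d/F_d$, and the relevant quotients are all locally free along $C_d$ at $x$ (which we have arranged) and that restriction to the general curve $C_d$ through $x$ is "transverse enough" that no collapsing of dimensions occurs. The surjectivity hypothesis \eqref{Mult.C.Surjective} is exactly what guarantees that the space of sections of $L_d\otimes\frakm_x$ divisible by the equation of a general $C_d$ has the expected dimension $h^0(B)-1$ rather than something larger, so invoking it at the right moment is the crux. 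Once that dimension count is in hand, the rank bound is a short diagram chase.
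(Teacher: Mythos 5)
There is a genuine gap at the decisive step. What the set-up actually gives you, for each general curve $C_d \in \linser{(L_d-B)\otimes\frakm_x}$, is only that the subspace $(\text{eq.\ of } C_d)\cdot H^0(B) \subseteq H^0(L_d\otimes\frakm_x)$ meets $F_d(x)$ in at least one nonzero element (this is what $\ol{K}_d \neq 0$ buys, fibrewise). Your proposal jumps from this to the assertion that $F_d(x)$ contains at least $h^0((L_d-B)\otimes\frakm_x)$ independent elements ``of the form $(\text{equation of }C_d)\cdot s$ with $s$ ranging over a basis.'' Read with $C_d$ fixed, this is impossible: that space has dimension only $h^0(B)$, which is bounded independently of $d$, whereas $h^0(L_d-B)-1$ grows quadratically in $d$. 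Read instead as the containment of the whole embedded copy of $H^0(B)$ in $F_d(x)$, it is precisely Lemma \ref{K.Trivial.Lemma}, which comes later and whose proof uses Lemma \ref{F.Large.Rank.Lemma} --- so that reading is circular. The missing idea is the mechanism that converts ``one nonzero intersection for each member of a family of curves of dimension $h^0((L_d-B)\otimes\frakm_x)-1$'' into a lower bound on $\dim F_d(x)$. The paper supplies it via the \emph{finite} multiplication morphism $\mu_d: \PPP(H^0(B))\times\PPP\big(H^0((L_d-B)\otimes\frakm_x)\big) \to \PPP\big(H^0(L_d\otimes\frakm_x)\big)$: the incidence set $Z=\mu_d^{-1}\big(\PPP(F_d(x))\big)$ dominates the second factor by the intersection statement above, so $\dim Z \geq h^0((L_d-B)\otimes\frakm_x)-1$, while finiteness of $\mu_d$ gives $\dim \PPP(F_d(x)) \geq \dim Z$. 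Without finiteness (i.e.\ the fact that a section of $L_d\otimes\frakm_x$ has only finitely many factorizations up to scalars), nothing prevents a small $F_d(x)$ from meeting every translate $t\cdot H^0(B)$, so some argument of this kind is indispensable and your outline does not contain one.

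Two further inaccuracies point to the same confusion. First, the sections of $L_d\otimes\frakm_x$ vanishing on $C_d$ form a space of dimension $h^0(B)$, not $h^0(B)-1$: vanishing at $x$ is automatic since $x\in C_d$, so no extra condition is imposed. Second, the surjectivity \eqref{Mult.C.Surjective} plays no role in Lemma \ref{F.Large.Rank.Lemma}; it is used afterwards, in deducing Theorem A from the two lemmas, to show that the embedded copies of $H^0(B)$ corresponding to varying $C_d$ span all of $H^0(L_d\otimes\frakm_x)$ and hence cannot all lie in the proper subspace $F_d(x)$. Invoking \eqref{Mult.C.Surjective} as ``the crux'' of the rank bound indicates that the proposal conflates the proof of this lemma with the final contradiction in the proof of the theorem.
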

\noi The second lemma shows that if $d$ is sufficiently large, then  the vertical inclusion on the left of \eqref{Basic.Diagram} is the identity. 
\begin{lemma}\label{K.Trivial.Lemma}
If $d \gg 0$, then $\ol{K}_d = H^0(B)_{C_d}$.
\end{lemma}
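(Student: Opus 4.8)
The plan is to reduce the asserted equality of sheaves to an equality of ranks, and then to establish that rank equality by playing a slope computation on $C_d$ against the lower bound of Lemma~\ref{F.Large.Rank.Lemma}.

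First I would show that it suffices to prove $\rk \ol{K}_d = h^0(B)$. Set $Q_1 = H^0(B)_{C_d}/\ol{K}_d$. Since every vertical arrow in \eqref{Basic.Diagram} is injective, the snake lemma produces a short exact sequence $0 \to Q_1 \to (M_d/F_d)|C_d \to \ol{M}_d/\ol{N}_d \to 0$. By our choice of $C_d$ the sheaf $M_d/F_d$ is locally free along $C_d$, so its restriction $(M_d/F_d)|C_d$ is a vector bundle; hence $Q_1$, being a subsheaf of a bundle on the smooth curve $C_d$, is torsion free. As $\ol{K}_d \subseteq H^0(B)_{C_d}$ already forces $\rk \ol{K}_d \le h^0(B)$, once we know $\rk \ol{K}_d = h^0(B)$ the sheaf $Q_1$ is torsion free of rank $0$, hence zero, giving $\ol{K}_d = H^0(B)_{C_d}$.

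Next I would carry out the slope estimate. Write $k = \rk \ol{K}_d$, $n = \rk \ol{N}_d$ and $R_1 = \rk \ol{M}_d = h^0(\ol{L}_d) - 1$. Because $\ol{K}_d$ is a subsheaf of the trivial bundle $H^0(B)_{C_d}$ we have $\deg \ol{K}_d \le 0$, while the stability of $\ol{M}_d$ gives $\deg \ol{N}_d < n\,\mu(\ol{M}_d)$ for the nonzero proper subsheaf $\ol{N}_d \subseteq \ol{M}_d$. (That $n \ge 1$ follows from Lemma~\ref{F.Large.Rank.Lemma}: were $\ol{N}_d = 0$, then $F_d|C_d = \ol{K}_d \subseteq H^0(B)_{C_d}$ would force $\rk F_d \le h^0(B)$, contradicting the Lemma for $d \gg 0$.) Since $\deg(F_d|C_d) = \deg \ol{K}_d + \deg \ol{N}_d$, the destabilizing inequality $\mu(F_d|C_d) \ge \mu(M_d|C_d)$, combined with $\mu(M_d|C_d) = \tfrac{-\deg \ol{L}_d}{R_1 + h^0(B)}$ and $\mu(\ol{M}_d) = \tfrac{-\deg \ol{L}_d}{R_1}$, reduces—after clearing the negative factor $-\deg \ol{L}_d$ and cross-multiplying—to $k R_1 > n\, h^0(B)$, that is, $k > n\, h^0(B)/R_1$.

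Finally I would feed in Lemma~\ref{F.Large.Rank.Lemma}. As $k \le h^0(B)$ is bounded and $k + n = \rk F_d \ge h^0(L_d - B) - 1$, we obtain $n \ge h^0(L_d - B) - 1 - h^0(B)$. Asymptotic Riemann--Roch shows that both $h^0(L_d - B) = h^0((d-b)A)$ and $R_1 = h^0(\ol{L}_d) - 1$ have leading term $\tfrac12 d^2 A^2$, so $n/R_1 \to 1$ as $d \to \infty$; hence the lower bound $n\,h^0(B)/R_1$ tends to $h^0(B)$ and, in particular, exceeds $h^0(B) - 1$ once $d \gg 0$. Thus $k > h^0(B) - 1$, and since $k$ is an integer with $k \le h^0(B)$ we conclude $k = h^0(B)$, whence the first paragraph gives $\ol{K}_d = H^0(B)_{C_d}$. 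The step I expect to be the main obstacle is exactly this asymptotic comparison: one must check that the image $\ol{N}_d$ of the destabilizing subsheaf is forced to have rank so close to the full rank $R_1$ of $\ol{M}_d$ that $n\,h^0(B)/R_1$ pushes past the integer threshold $h^0(B) - 1$, which requires verifying that the two leading coefficients genuinely agree so that $n/R_1 \to 1$ rather than to a constant strictly smaller than $1$.
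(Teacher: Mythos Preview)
Your proposal is correct and follows essentially the same route as the paper: reduce to the rank equality by observing that $\ol{K}_d$ is saturated in $H^0(B)_{C_d}$, then combine $\deg \ol{K}_d\le 0$, stability of $\ol{M}_d$, and the destabilizing hypothesis to get a lower bound on $\rk\ol{K}_d$, and finally use Lemma~\ref{F.Large.Rank.Lemma} together with asymptotic Riemann--Roch to push this bound past $h^0(B)-1$. The only cosmetic difference is that the paper manipulates the inequalities into $\rk\ol{K}_d > h^0(B)\cdot\frac{\rk F_d}{\rk M_d}$ rather than your $\rk\ol{K}_d > h^0(B)\cdot\frac{n}{R_1}$, but both ratios tend to $1$ for the same reason.
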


Granting these assertions for now, we give the
\begin{proof}[Proof of Theorem A] We need to show that if $d \gg 0$ then the picture introduced above  cannot occur. To this end, we consider the fibres at the fixed point $x \in X$ of the vector bundles appearing in the left hand square of \eqref{Basic.Diagram}. Recalling that the fibre $M_d(x)$ of  $M_d$ at $x$ is canonically identified with 
$ H^0(X, L_d \otimes \frakm_x)$, these take the form
\begin{equation}\label{Diagram.of.Fibres}
\begin{gathered}
\xymatrix{
& F_d(x)\ar@{^{(}->}[d]  \\
 H^0(X, B)  \ar@{^{(}->}[r]^-{ C_d}& H^0(X, L_d \otimes \frakm_x).
}
\end{gathered}
\end{equation}
Here the bottom map is the natural inclusion determined by a local equation for $C_d \in \linser{(L_d -B)\otimes \frakm_x}$. It follows from Lemma \ref{K.Trivial.Lemma} that $H^0(X,B)$ maps into the subspace 
\[ F_d(x) \ \subsetneqq \ H^0(X, L_d \otimes \frakm_x).\]
So for the required contradiction, it is enough to show that as $C_d$ varies over an open subset of $ \linser{(L_d -B)\otimes \frakm_x}$, the images of the corresponding embeddings of $H^0(X,B)$ span all of $H^0(X, L_d \otimes \frakm_x)$. But
this follows from the surjectivity of the map \eqref{Mult.C.Surjective}.
\end{proof}

\begin{proof}[Proof of Lemma \ref{F.Large.Rank.Lemma}]
We continue to work with the diagram \eqref{Diagram.of.Fibres}, and we write $\PPP( W )$ for the projective space of one-dimensional subspaces of a vector space $W$. Multiplication of sections gives rise to a finite morphism:
\[
\mu_d : \PPP\big(H^0(X, B)\big) \times \PPP \big( H^0(X, (L_d -B )\otimes \frakm_x) \big) \lra \PPP\big( H^0(X, L_d \otimes \frakm_x) \big). 
\]
Set 
\[ Z \ =\  \mu_d^{-1} \left( \PPP\big( F(x) \big) \right). \] 
Then 
\[ \dim \PPP\big(F(x)\big) \ \ge \ \dim Z \tag{*} \] thanks to the finiteness of $\mu_d$. On the other hand, for general $C_d \in \linser{(L_d - B) \otimes \frakm_x}$, the image of the corresponding inclusion 
\[  H^0(X, B) \ \subseteq \ H^0(X, L_d \otimes \frakm_x) \]
must meet the subspace $F(x) \subseteq H^0(X, L_d \otimes \frakm_x)$ non-trivially: indeed, this follows from \eqref{Basic.Diagram} and the fact that  $\ol{K_d}(x) \ne 0$. But this means that the projection
\[  \textnormal{pr}_2 : Z \lra \PPP\big( H^0(X, (L_d-B) \otimes \frakm_x)) \big) \  \tag{**}
\]
is dominant. The Lemma follows upon combining (*) and (**). 
\end{proof}

\begin{proof}[Proof of Lemma \ref{K.Trivial.Lemma}] 
Since $M_{d}/F_{d}$ is locally free along $C_{d},$ it follows from \eqref{Basic.Diagram} that $\overline{K}_{d}$ is a saturated subsheaf of $H^{0}(B)_{C_d},$ so it suffices to show that $\rk \ol{K}_d = h^0(B).$  The argument is numerical. First, note from \eqref{Basic.Diagram} and the stability of $\ol{M}_d$ that
\begin{equation} \label{Big.Eqn.1}
\begin{aligned}
\mu\big( F_d | C_d \big) \   = \ \frac{ \deg \ol{K}_d + \deg \ol{N}_d}{\rk F_d} \   & \le \ \frac{\deg \ol{N}_d}{\rk F_d}\\  & = \ \mu\big( \ol{N}_d\big) \cdot \left( \frac{ \rk \ol{N}_d}{\rk F_d} \right)\\ & <\ \mu\big(\ol{M}_d\big)\cdot \left( 1 - \frac{\rk \ol{K}_d}{\rk F_d} \right).  
\end{aligned}   \end{equation}
Now $\deg(M_d | C_d) = \deg(\ol{M}_d)$, and since
\[
\mu\big( M_d | C_d \big) \ \le \ \mu \big( F_d | C_d \big), \]
equation \eqref{Big.Eqn.1} yields:
    \[
\frac{ \deg (M_d | C_d)}{\rk \ol{M}_d + h^0(B)} \ < \  \frac{ \deg ( M_d|C_d)}{\rk \ol{M_d}}\cdot  \left( 1 - \frac{ \rk \ol{K}_d}{\rk{F_d}} \right).
\]
Observing  that  $\deg (M_d|C_d) < 0$, this is equivalent to the inequality\[
\frac{	1}{\rk \ol{M}_d + h^0(B)} \ > \ \frac{1}{\rk \ol{M}_d} \cdot \left( 1 - \frac{ \rk \ol{K}_d}{\rk{F_d}} \right),
\]
i.e.:
\[ \frac{{\rk \ol{M}_d}}{\rk \ol{M}_d + h^0(B)} \ > \  \left( 1 - \frac{ \rk \ol{K}_d}{\rk{F_d}} \right).\]
Thus
\[ \frac{\rk \ol{K}_d }{\rk F_d} \ > \ 1 -  \frac{{\rk \ol{M}_d}}{\rk \ol{M}_d + h^0(B)}\ = \ \frac{h^0(B)}{\rk M_d}, 
\]
and hence  
\[  \rk \ol{K}_d \ > \ h^0(B) \cdot \left ( \frac{ \rk F_d}{\rk M_d} \right). \tag{*}
\]
But by the previous lemma,  $\rk F_d \ge h^0(X, L_d -B) -1$. Furthermore,  $B$ is independent of $d$, and $\rk M_d = h^0(X, L_d) -1$. Thus as $d$ grows, the fraction on the right in (*) becomes arbitrarily close to $1$.\footnote{In fact, 
$\big(h^0(L_d) - h^0(L_d - B)\big) = O(d)$, whereas $h^0(L_d)$ grows quadratically in $d$.} It follows  that
\[  \rk \ol K_d \ > \ h^0(B) - 1 \]
provided that $d \gg 0$, and hence $\rk \ol{K}_d = h^0(B)$, as required.
\end{proof}

\section{Complements}
In this section we first of all prove Proposition \ref{PropB.Intro} by adapting the method of proof of Theorem 1.1 in \cite{Coa}.  Then we propose some open problems. 

\subsection{Coand\v{a}'s Argument}
We begin by stating (without proof) two preliminary results on which the method rests; the first of these is a cohomological characterization of stability, and the second is a vanishing theorem of Green.

\begin{lemma}
\label{stabcrit}
Let $E$ be a vector bundle on $X.$  If for every $r$ with $0 < r < {\rm rk}(E)$ and for every line bundle $N$ on $X$ with $\mu_{L}(\Lambda^{r}E \otimes N) \leq 0$ one has $H^0(\Lambda^{r}E \otimes N)=0,$ then $E$ is ${L}-$stable. \hfill \qedsymbol
\end{lemma}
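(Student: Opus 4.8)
The plan is to prove the contrapositive: I will show that if $E$ is not $L$-stable, then for some $r$ with $0<r<\rk(E)$ there is a line bundle $N$ with $\mu_{L}(\Lambda^{r}E\otimes N)\le 0$ and $H^{0}(\Lambda^{r}E\otimes N)\ne 0$, contrary to the hypothesis. So suppose $E$ is not $L$-stable; choosing a destabilizing subsheaf and passing to its saturation, we obtain a saturated subsheaf $F\subseteq E$ with $Q:=E/F$ torsion-free, $r:=\rk F$ satisfying $0<r<\rk E$, and $\mu_{L}(F)\ge\mu_{L}(E)$.

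The first point is a codimension estimate. Since $F$ is a saturated subsheaf of the locally free sheaf $E$ on the smooth variety $X$, it is reflexive, so $\det F:=(\Lambda^{r}F)^{\vee\vee}$ is a line bundle and $F$ is locally free over the complement of a closed set of codimension $\ge 2$; likewise $Q$, being torsion-free, is locally free over the complement of a closed set of codimension $\ge 2$. Let $U\subseteq X$ be the dense open set over which $F$ and $Q$ are both locally free, so that $\codim_{X}(X\setminus U)\ge 2$, $F|_{U}$ is a subbundle of $E|_{U}$, and $\Lambda^{r}F|_{U}=\det F|_{U}$.

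Now $\Lambda^{r}$ applied to the subbundle inclusion $F|_{U}\hookrightarrow E|_{U}$ gives an injection of line bundles $\det F|_{U}\hookrightarrow\Lambda^{r}E|_{U}$, that is, a nonzero section over $U$ of the locally free sheaf $\Lambda^{r}E\otimes(\det F)^{-1}$. Since $X$ is smooth (hence normal) and $X\setminus U$ has codimension $\ge 2$, this section extends to a nonzero $s\in H^{0}\big(X,\Lambda^{r}E\otimes N\big)$ with $N:=(\det F)^{-1}$. On the other hand, from $\rk(\Lambda^{r}E)=\binom{\rk E}{r}$ and $c_{1}(\Lambda^{r}E)=\binom{\rk E-1}{r-1}c_{1}(E)$ one computes $\mu_{L}(\Lambda^{r}E)=r\,\mu_{L}(E)$, and since $c_{1}(N)\cdot L=-c_{1}(\det F)\cdot L=-c_{1}(F)\cdot L=-r\,\mu_{L}(F)$, we get
\[
\mu_{L}\big(\Lambda^{r}E\otimes N\big)\;=\;r\big(\mu_{L}(E)-\mu_{L}(F)\big)\;\le\;0 .
\]
By hypothesis this forces $H^{0}(\Lambda^{r}E\otimes N)=0$, contradicting $s\ne 0$. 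Hence $E$ is $L$-stable.

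The rank and Chern-class computation is routine bookkeeping. The one place requiring care is the reduction to an honest \emph{line} bundle $N$ and the extension of the section across $X\setminus U$: this is exactly where one uses that a saturated subsheaf of a locally free sheaf on a smooth variety is reflexive (so that $\det F$ is invertible and the non-locally-free locus has codimension $\ge 2$) together with the normality of $X$, which guarantees that a section of a locally free sheaf extends over a codimension-two subset.
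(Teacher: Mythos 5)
Your argument is correct, and it is the standard (Hoppe-type) proof of this criterion; the paper itself states the lemma without proof, so there is nothing to diverge from. The two points needing care — that the saturation $F$ of a destabilizing subsheaf is reflexive with $\det F$ invertible and locally free outside codimension $\ge 2$, and that the induced section of $\Lambda^{r}E\otimes(\det F)^{-1}$ extends across that locus by normality — are exactly the ones you handle, and the slope computation $\mu_{L}(\Lambda^{r}E\otimes N)=r(\mu_{L}(E)-\mu_{L}(F))\le 0$ is right. Only a cosmetic remark: since $\dim X = n\ge 2$ in the application, the $L$-degree should be written $c_{1}(\cdot)\cdot L^{n-1}$ rather than $c_{1}(\cdot)\cdot L$, which changes nothing in the argument.
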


\begin{lemma}
\label{mlg}
\textnormal{(\cite[3.a.1]{Gr})} Let $N,N'$ be line bundles on $X$ and assume $N$ is very ample.  Then for $r \geq h^{0}(N'),$ we have $H^{0}(\Lambda^{r}M_{N} \otimes N')=0.$ \hfill \qedsymbol
\end{lemma}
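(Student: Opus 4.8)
The plan is to realise $H^0(X,\Lambda^{r}M_N\otimes N')$ as the space of sections of a subbundle of $\Lambda^{r}H^0(N)\otimes N'$, to reduce by a contraction argument to the single exponent $r=h^0(N')$, and then to dispose of that case by a general-position argument on $X$; this last step is the only one carrying real content.

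Write $V=H^0(X,N)$. I would first take exterior powers of $0\to M_N\to V\otimes\OO_X\to N\to 0$, obtaining for each $r$ the exact sequence $0\to\Lambda^{r}M_N\to\Lambda^{r}V\otimes\OO_X\to\Lambda^{r-1}M_N\otimes N\to 0$; twisting by $N'$ and passing to $H^0$ exhibits $H^0(\Lambda^{r}M_N\otimes N')$ as the subspace of $\Lambda^{r}V\otimes H^0(N')$ consisting of those $\alpha=\sum_j\omega_j\otimes t_j$ — for $t_1,\dots,t_n$ a basis of $H^0(N')$ — satisfying the pointwise condition
\[
\textstyle\sum_j(\ell_x\lrcorner\omega_j)\,t_j(x)=0\qquad\text{for all }x\in X,
\]
where $\ell_x\in V^*$ is evaluation at $x$ (defined up to scalar via a local trivialisation of $N$) and $\lrcorner$ is interior product; here one uses that the fibre $M_N(x)$ is the hyperplane $\ker\ell_x\subseteq V$, so that $\Lambda^{r}M_N(x)=\{v\in\Lambda^{r}V:\ell_x\lrcorner v=0\}$. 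Since $\Lambda^{r}M_N=0$ once $r>\rk M_N=h^0(N)-1$, I may assume $h^0(N')\le\rk M_N$, as otherwise there is nothing to prove.

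The second step is an upward induction on $r$: if $H^0(\Lambda^{r-1}M_N\otimes N')=0$ then $H^0(\Lambda^{r}M_N\otimes N')=0$. Indeed, given $\alpha$ as above and any point $x$, contracting the pointwise condition with $\ell_x$ (interior products anticommute) shows that $\ell_x\lrcorner\alpha=\sum_j(\ell_x\lrcorner\omega_j)\otimes t_j$ again satisfies the pointwise condition everywhere, hence lies in $H^0(\Lambda^{r-1}M_N\otimes N')$; and if $\alpha\ne 0$ then $\ell_x\lrcorner\alpha\ne 0$ for some $x$, since the functionals $\{\ell_x\}_{x\in X}$ span $V^*$ (the only place the positivity of $N$ enters — global generation suffices) and no nonzero element of $\Lambda^{r}V$ is annihilated by every interior product. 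So it suffices to prove $H^0(\Lambda^{n}M_N\otimes N')=0$ with $n:=h^0(N')$.

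This base case is the heart of the matter. Write $\alpha=\sum_{j=1}^{n}\omega_j\otimes t_j\in H^0(\Lambda^{n}M_N\otimes N')$ and aim to show each $\omega_j=0$. Choose $n$ general points $p_1,\dots,p_n\in X$: as $n\le\rk M_N<\dim V$ and the $\ell_x$ span $V^*$, the functionals $\ell_{p_1},\dots,\ell_{p_n}$ may be taken linearly independent; completing them to a basis of $V^*$ and writing $\omega_j=\sum_{|I|=n}a^j_Ie_I$ in the dual basis of $V$, the pointwise condition at $p_i$, read off in these coordinates, yields $\sum_ja^j_It_j(p_i)=0$ for every $n$-subset $I$ and every $i\in I$ with $i\le n$. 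Taking $I=\{1,\dots,n\}$ shows $(a^1_{\{1,\dots,n\}},\dots,a^n_{\{1,\dots,n\}})$ lies in the kernel of the $n\times n$ matrix $(t_j(p_i))_{i,j}$; but this matrix is invertible for general $p_i$, since $n=h^0(N')$ general points impose independent conditions on $|N'|$ — an auxiliary fact I would isolate and prove by noting that at each step a general point avoids the base locus of the (still nonzero) residual subsystem, so that $h^0\big(N'\otimes\frakm_{p_1}\cdots\frakm_{p_k}\big)$ drops by exactly one as $k$ runs from $0$ to $n$. Hence $a^j_{\{1,\dots,n\}}=\langle\ell_{p_1}\wedge\dots\wedge\ell_{p_n},\,\omega_j\rangle=0$ for general, and therefore for all, $(p_1,\dots,p_n)$; and since the $\ell_x$ span $V^*$, the wedges $\ell_{p_1}\wedge\dots\wedge\ell_{p_n}$ span $\Lambda^{n}V^*$, forcing $\omega_j=0$. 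The principal obstacle is exactly this base case — setting up the adapted coordinates, extracting the single invertible system $(t_j(p_i))$ from the pointwise relations, and isolating the auxiliary statement on general points and $|N'|$; the remaining steps are formal, and since the argument never invokes exterior powers of semistable bundles it holds in arbitrary characteristic.
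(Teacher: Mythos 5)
The paper does not actually prove this lemma --- it is stated without proof and attributed to Green \cite[3.a.1]{Gr} --- so there is no internal argument to compare against; judged on its own, your proof is correct. Your identification of $H^0(\Lambda^r M_N\otimes N')$ with the elements of $\Lambda^r H^0(N)\otimes H^0(N')$ killed pointwise by contraction with the evaluation functionals $\ell_x$ is right (it uses only that $M_N$ is the subbundle with fibre $\ker\ell_x$, hence only that $N$ is globally generated); the reduction to $r=h^0(N')$ is sound because $\ell\mapsto \ell\lrcorner\alpha$ is linear and the $\ell_x$ span $H^0(N)^*$; and the base case is handled correctly: contracting the relations at $p_1,\dots,p_n$ shows that the vector of pairings $\langle\ell_{p_1}\wedge\dots\wedge\ell_{p_n},\omega_j\rangle$ lies in the kernel of the matrix $\big(t_j(p_i)\big)$, which is invertible because $h^0(N')$ general points impose independent conditions on $|N'|$ --- your base-locus induction for that auxiliary fact is fine on an irreducible variety. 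Two points you leave implicit are harmless but worth a word: the step ``for general, and therefore for all, $(p_1,\dots,p_n)$'' uses that the pairing is a regular section of $N^{\boxtimes n}$ on $X^n$, so its vanishing is a closed condition; and the case $h^0(N')=0$ is trivial since $H^0(\Lambda^r M_N\otimes N')\subseteq \Lambda^r H^0(N)\otimes H^0(N')$. In substance this is the classical general-position proof of Green's vanishing theorem; compared with the paper's bare citation it buys a self-contained argument, valid in arbitrary characteristic and under the weaker hypothesis that $N$ is globally generated, which is more than the paper needs when it applies the lemma to $N=dA$ in the proof of Proposition B.
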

\begin{proof}[Proof of Proposition \ref{PropB.Intro}]  Let $X$ be a smooth projective variety of dimension $n \geq 2$ for which ${\rm Pic}(X) \cong \mathbb{Z} \cdot [A]$ for an ample line bundle $A.$  Consider the function $q: \mathbb{N} \rightarrow \mathbb{Q}$ defined by $q(t)=\frac{h^{0}(tA)-1}{t}.$  Since $q(t)=\frac{A^{n}}{n!}t^{n-1}+O(t^{n-2})$ for $t >> 0$ by asymptotic Riemann-Roch, there exists a positive integer $d_0$ satisfying the following properties:
\begin{itemize}
\item[(1)]{For all integers $a$ satisfying $1 \leq a \leq d_{0}-1,$ we have $q(a) < q(d_{0}).$}
\vskip 5pt
\item[(2)]{For all integers $d \geq d_0,$ we have that $dA$ is very ample and $q(d) < q(d+1).$}
\end{itemize} 
An immediate consequence is that $q(a) < q(d)$ whenever $d \geq d_{0}$ and $1 \leq a \leq d-1.$  For the rest of the proof we fix an integer $d \geq d_0.$

Recalling that $\Pic(X) = \ZZ \cdot [A]$ by assumption, it suffices by Lemmas \ref{stabcrit} and  \ref{mlg} to show that given integers $a$ and $0 < r < h^{0}(dA)-1$, one has the implication
\begin{equation}
\mu_{A}(\Lambda^{r}M_{d} \otimes \mathcal{O}_{X}(aA)) \, \leq \, 0 \ \Longrightarrow \ r\,  \geq \, h^{0}(aA) ,
\end{equation}
where as before $M_d = M_{dA}$. This is automatic for $a \leq 0,$ so we assume $a \geq 1$ throughout.  We have that
\begin{equation}
{\mu}_{A}(\Lambda^{r}M_{d} \otimes \mathcal{O}_{X}(aA))\ =\ r\cdot {\mu}_{A}(M_{d})+a\cdot(A^{n}) \ =\ (A^{n}) \cdot \biggl(a-\frac{dr}{h^{0}(dA)-1}\biggr)
\end{equation}
Our assumption that $\mu_{A}(\Lambda^{r}M_{d} \otimes \mathcal{O}_{X}(aA)) \leq 0$ then implies that $a \leq \frac{d\cdot r}{h^{0}(dA)-1},$ or 
\begin{equation}
r \ \geq \ a\cdot \biggl(\frac{h^{0}(dA)-1}{d}\biggr).
\end{equation}
In particular, $a < d,$ so $1 \leq a \leq d-1.$  We will be done once we verify that
\begin{equation}
\label{reduction}
a\cdot \biggl(\frac{h^0(dA)-1}{d}\biggr)\  > \ h^{0}(aA)-1.
\end{equation}
for $1 \leq a \leq d-1.$  But (\ref{reduction}) is equivalent to $q(a) < q(d),$ so this follows from our assumption on $d.$  \end{proof}

\begin{remark}[Rigidity of $M_L$] Let $L$ be a very ample line bundle on a smooth complex projective variety $X$ of dimension $\ge 3$ with $H^1(X, \OO_X) = 0$. Then arguing as in the proof of \cite[Proposition 1]{Cam2}, one sees that $M_L$ is rigid, i.e. $\textnormal{Ext}^1(M_L, M_L) = 0$. Consequently, in the situation of Proposition \ref{PropB.Intro}, $M_d$ again represents an isolated point of the moduli space of bundles when $\dim_{\CC} X \ge 3$ and $d \gg 0$. 
\end{remark}
\subsection{Some Open Problems}

Recall that if $X$ is a smooth curve of genus $g$, then $M_L$ is stable as soon as $\deg L \ge 2g + 1$. This suggests
\begin{problem}
Find an effective version of Theorem A.
\end{problem}
\noi Presumably one would want to work with divisors of the sort $L = K_X + B + N$ with $B$ satisfying a suitable positivity hypothesis, and $N$ nef.

It is also interesting to ask whether $M_d$ satisfies some stronger stability properties:
\begin{problem} As before, let $L_d = dA + P$, and put $M_d = M_{L_d}$. Is $M_d$ slope stable with respect to \textit{any} polarization on $X$ when $d \gg 0$? In characteristic $p > 0$, is it strongly stable?
\end{problem}

Finally, we conjecture that our main result extends to all dimensions.
\begin{conjecture}
Let $X$ be a smooth projective variety of dimension $n$, and define $M_d$ as above. Then $M_d$ is $A$-stable for every $d \gg 0$.
\end{conjecture}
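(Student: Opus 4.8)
\noi The plan is to imitate the proof of Theorem \ref{Theorem.A.Intro}. Assuming that $M_d$ is not $A$-stable, one fixes a saturated destabilizing subsheaf $F_d \subseteq M_d$ and a point $x = x_d \in X$ at which $F_d$ is locally free, and tries to reach a contradiction by restricting everything to a suitably chosen curve $C = C_d \subseteq X$ passing through $x$. What one wants is a short exact sequence
\[
0 \lra W_d \otimes_k \OO_C \lra M_d|C \lra M_{\ol{L}_d} \lra 0
\]
in which $M_{\ol{L}_d}$ is the syzygy bundle on $C$ of $\ol{L}_d = L_d|C$ and $W_d$ is a finite-dimensional vector space, together with the two properties
\[
\text{(i)} \ \ \dim W_d \ \text{ is negligible next to } \ \rk M_d , \qquad\qquad \text{(ii)} \ \ \deg \ol{L}_d \ > \ 2g(C) + 1 .
\]
Granting this, the machinery of Section 1 should run with little change. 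By \cite{EL1}, property (ii) makes $M_{\ol{L}_d}$ stable on $C$, so a slope comparison forces the kernel of $F_d|C \to M_{\ol{L}_d}$ to be nonzero. The analogue of Lemma \ref{F.Large.Rank.Lemma} --- that $\rk F_d$ is very nearly $\rk M_d$ --- would again follow from the finiteness of a multiplication morphism between projective spaces of sections, and this finiteness survives in all dimensions because an effective divisor factors into irreducible components in only one way. The analogue of Lemma \ref{K.Trivial.Lemma} --- that the left-hand inclusion becomes an equality upon restriction to $F_d|C$ --- is purely numerical and goes through given (i) and this rank estimate. And the concluding step again amounts to the surjectivity of a multiplication map of the type \eqref{Mult.C.Surjective}.

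When $X$ is a surface one takes for $C$ a general member of $\linser{L_d - B}$, with $B = bA + P$ as in Section 1: the sequence $0 \to \OO_X(B) \to \OO_X(L_d) \to \OO_C(L_d) \to 0$ and the vanishing $H^1(X, B) = 0$ produce the displayed sequence with $W_d = \HH{0}{X}{B}$, of rank independent of $d$, so (i) holds; and since $B - K_X$ is very ample, adjunction gives $\ol{L}_d \lin K_C + (B - K_X)|C$, whence (ii). When $\dim X = n \ge 3$, a curve on $X$ must be cut out by at least $n-1$ divisors, so one is forced to work with a complete intersection $C = D_1 \cap \cdots \cap D_{n-1}$. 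Analysing the restriction $\HH{0}{X}{L_d} \to \HH{0}{C}{\ol{L}_d}$ through the Koszul resolution of $\OO_C$ together with Kodaira-type vanishing, one finds that $\dim W_d$ is governed by $\sum_i \HH{0}{X}{L_d - D_i}$, so (i) forces each $D_i$ to be numerically close to $L_d$; but then $D_1 + \cdots + D_{n-1}$ is roughly $(n-1)L_d$, and the adjunction formula $K_C = \big(K_X + \sum_i D_i\big)|C$ gives
\[
\deg \ol{L}_d - \big(2g(C) - 2\big) \ = \ \Big( L_d - K_X - \textstyle\sum_i D_i \Big) \cdot [C] \ \approx \ \big( -(n-2)L_d - K_X + (n-1)B \big) \cdot [C] ,
\]
which is \emph{negative} for $d \gg 0$ once $n \ge 3$, so (ii) fails. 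Choosing the $D_i$ in a bounded linear system reverses the trouble: $g(C)$ is then controlled, but each $\HH{0}{X}{L_d - D_i}$, and hence $\dim W_d$, grows like $\rk M_d$. Thus (i) and (ii) are in genuine conflict --- for $n = 2$ the lone subtraction $L_d - K_X - C = B - K_X$ leaves a positive divisor, whereas for $n \ge 3$ the extra $n-2$ subtractions of (roughly) $L_d$ overwhelm it. A secondary difficulty surfaces already in the analogue of Lemma \ref{F.Large.Rank.Lemma}: when $C$ is a complete intersection, the nonvanishing kernel only tells us that $F_d(x)$ meets a ``mixed'' subspace $\sum_i f_i \cdot \HH{0}{X}{B}$ rather than a single summand $f \cdot \HH{0}{X}{B}$, and it is not transparent how to extract the needed lower bound on $\rk F_d$ from this.

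To prove the conjecture one would therefore need a genuinely new ingredient: either a reduction to curves that are not complete intersections --- or some interpolation between the two extremes above that keeps both (i) and (ii) in force --- or an argument bypassing curves altogether, such as an extension of the cohomological method of Coand\v{a} (used for Proposition \ref{PropB.Intro}) beyond the case $\Pic(X) = \ZZ$; the latter would require controlling $\HH{0}{X}{\Lambda^r M_d \otimes N}$ for all line bundles $N$ with $\mu_A(\Lambda^r M_d \otimes N) \le 0$, and when the Picard number exceeds $1$ there are many more such $N$ than Green's vanishing theorem disposes of directly. I expect the resolution of the conflict between (i) and (ii) --- equivalently, the choice of the right family of curves to restrict to --- to be the main obstacle.
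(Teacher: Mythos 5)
You have not proved the statement, and there is nothing in the paper to measure a proof against: this is posed as an open \emph{conjecture}, and the authors say explicitly in the introduction that their argument ``does not work in this setting'' when $\dim X \ge 3$. So the honest conclusion of your proposal --- that a genuinely new ingredient is needed --- is the correct one, and your diagnosis of the obstruction is essentially the right account of why the method of Section 1 is confined to surfaces. The surface proof needs simultaneously that the trivial subbundle $W_d \otimes \OO_C \subseteq M_d|C$ has rank negligible compared to $\rk M_d$ (so that the numerical argument of Lemma \ref{K.Trivial.Lemma} closes up) and that $\deg(L_d|C) > 2g(C)+1$ (so that \cite{EL1} applies); on a surface a single curve $C \in \linser{L_d - B}$ achieves both, because $W_d = \HH{0}{X}{B}$ has dimension independent of $d$ while $L_d - K_X - C = B - K_X$ stays positive. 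Your computation that for a complete-intersection curve in dimension $n \ge 3$ these two requirements pull in opposite directions --- either the $D_i$ are comparable to $L_d$ and adjunction kills the degree condition, or they are bounded and the Koszul kernel $W_d$ grows like $\rk M_d$ --- is sound, and you also correctly flag the secondary problem that with several defining equations the fibre-at-$x$ argument of Lemma \ref{F.Large.Rank.Lemma} only places $F_d(x)$ against a sum $\sum_i f_i \cdot \HH{0}{X}{B}$ rather than a single embedded copy of $\HH{0}{X}{B}$. Your suggested alternative route via Coand\v{a}'s cohomological criterion is exactly what the paper does carry out, but only under the hypothesis $\Pic(X) = \ZZ \cdot [A]$ (Proposition \ref{PropB.Intro}), and your remark about why Green's vanishing theorem alone cannot handle arbitrary $N$ when the Picard number exceeds one is the right reason that argument does not settle the general case either.

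Two small imprecisions, neither affecting your conclusion: a curve in $X$ need not be ``cut out by $n-1$ divisors'' at all (it need not even be a set-theoretic complete intersection), so the forced dichotomy you describe applies to the natural class of curves for which the restriction sequence for $M_d$ is accessible, not to all conceivable restrictions --- which is precisely why the choice of curve family remains the open issue rather than a proven impossibility; and to get the displayed sequence on $C$ one also needs surjectivity of $\HH{0}{X}{L_d} \to \HH{0}{C}{L_d|C}$ and the vanishing making $W_d$ computable from the Koszul complex, which should be stated as hypotheses on the $D_i$ rather than taken for granted. As a status report on the conjecture your proposal is accurate; as a proof it is, by its own admission, not one.
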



\begin{thebibliography}{ABC}
\parskip 5pt
 
\bibitem{Bea}
A. Beauville, {\sl Some stable vector bundles with reducible theta divisor}, Manuscripta Math. \textbf{110} (2003), p. 343-349
 

 
\bibitem{Bre}
H. Brenner, {\sl Looking out for stable syzygy bundles}, with an appendix by Georg Hein, Adv. Math. \textbf{219} (2008), no. 2, p. 401-427 
 


\bibitem{Bu}
D. Butler, {\sl Normal generation of vector bundles over a curve}, J. Differential Geom. \textbf{39} (1994), p. 1-34
 

 
\bibitem{Cam1}
C. Camere, {\sl About the stability of the tangent bundle of $\PP^n$ restricted to a curve}, C. R. Acad. Sci. Paris, Ser. I \textbf{346} (2008), p. 421-426
 
 
\bibitem{Cam2}
C. Camere, {\sl About the stability of the tangent bundle of $\mathbb{P}^{n}$ restricted to a surface}, Math. Zeit. \textbf{271} (2012), no. 1-2, p. 499-507
 

 
\bibitem{Coa}
I. Coand{\v{a}}, {\sl On the stability of syzygy bundles}, Internat. J. Math. \textbf{22} (2011), no. 4, p. 515-534.
 

 
\bibitem{CMMR}
L. Costa, P. Macias Marques and R. M. Mir{\'{o}}-Roig, {\sl Stability and unobstructedness of syzygy bundles}, J. Pure Appl. Algebra \textbf{214} (2010), no. 7, p. 1241-1262
 

 
\bibitem{EL1}
L. Ein and R. Lazarsfeld, {\sl Stability and restrictions of Picard bundles, with an application to the normal bundles of elliptic curves,} in {\sl Complex Projective Geometry,} London Math. Soc. Lecture Note Ser., \textbf{179} (1992), p. 149-156
 

 
 


\bibitem{Fl}
H. Flenner, {\sl Restrictions of semistable bundles on projective varieties}, Comment. Math. Helv. \textbf{59} (1984), p. 635-650


\bibitem{Gr} M. Green, {\sl Koszul cohomology and the geometry of projective varieties}, J. Diff. Geom. \textbf{19} (1984), p. 125-171.

 
\bibitem {MMR}
P. Macias Marques and R. M. Mir{\'{o}}-Roig, {\sl Stability of syzygy bundles}, Proc. Amer. Math. Soc. \textbf{139} (2011), p. 3155-3170 
 
 
\bibitem {Mis}
E. Mistretta, {\sl Stability of line bundle transforms on curves with respect to low codimensional subspaces}, J. London Math. Soc. (2) \textbf{78} (2008), p.172-182
 
 
\bibitem {PR}
K. Paranjape and S. Ramanan, {\sl On the canonical ring of a curve}, in Algebraic geometry and commutative algebra, vol. II, Kinokuniya, Tokyo (1988) p. 503-516
 

 
\bibitem {Tri}
V. Trivedi, {\sl Semistability of syzygy bundles on projective spaces in positive characteristics}, Internat. J. Math. \textbf{21} (2010), no. 11, p. 1475-1504
 

\end{thebibliography}
\end{document}